\newtheorem{theorem}{Theorem}[section]
\newtheorem{proposition}[theorem]{Proposition}
\newtheorem{lemma}[theorem]{Lemma}
\newtheorem{cor}[theorem]{Corollary}
\newtheorem{question}[theorem]{Question}
\newtheorem{definition}[theorem]{Definition}
\theoremstyle{plain}
\theoremstyle{remark}
\newtheorem{remark}[theorem]{Remark}
\newcommand{\C}{{\mathbb C}}
\newcommand{\Q}{{\mathbb Q}}
\newcommand{\R}{{\mathbb R}}
\newcommand{\Z}{{\mathbb Z}}
\newcommand{\A}{{\mathbb A}}
\newcommand{\bP}{{\mathbb P}}
\author{Khoa Nguyen}
\address{
Khoa Nguyen \\
Department of Mathematics\\
University of California\\
Berkeley, CA 94720 
}
\email{khoanguyen2511@gmail.com}
\urladdr{www.math.berkeley.edu/\~{}khoa}
\keywords{local conjugacies, algebraic independence, B\"ottcher coordinates, canonical heights}
\subjclass[2010]{Primary 11J91, 37F10. Secondary: 37P30.}
\begin{document}
	\title[Independence of Local Conjugacies and Related Questions]{Algebraic Independence of Local Conjugacies and Related Questions in Polynomial Dynamics}
	\date{10/9/2013 (version 1).}
	\begin{abstract}		 
	  Let $K$ be an algebraically closed field of characteristic 0 and $f\in K[t]$
	  a polynomial of degree $d\geq 2$. There exists a local conjugacy $\psi_f(t)\in 
	  tK[[1/t]]$
	  such that $\psi_f(t^d)=f(\psi_f(t))$. It has been known that $\psi_f$ is transcendental
	  over $K(t)$ if $f$ is not conjugate to
	  $t^d$ or a constant multiple of the Chebyshev polynomial. In this paper, we study the
	  algebraic independence of $\psi_{f_1}$,\ldots,$\psi_{f_n}$ 
	  using a recent result of Medvedev-Scanlon. Related questions in transcendental number
	  theory and canonical heights in arithmetic dynamics are also discussed.
	\end{abstract}
	\maketitle
	\section{Introduction and Main Results} \label{sec:intro}
	Throughout this paper, let $K$ be an algebraically closed field of
	characteristic 0. Let $f\in K[t]$ be a polynomial of degree $d\geq 2$. 
	There exists a Laurent series:
	$$\psi_f(t)=a_{-1}t+a_0+\frac{a_1}{t}+\frac{a_2}{t^2}+\ldots\in tK[[1/t]],\ a_{-1}\neq 
	0$$
	such that 
	\begin{equation}\label{eq:functional}
	\psi_f(t^d)=f(\psi_f).
	\end{equation}
	Moreover, another series $\tilde{\psi_f}$ satisfies the above conditions
	if and only if there exists a $(d-1)$th-root of unity $\zeta$
	such that $\tilde{\psi_f}(t)=\psi_f(\zeta t)$. To prove these claims, 
	we may use the functional equation
	(\ref{eq:functional}) to either write
	down and solve the system of equations involving the coefficients of $\psi_f$,
	or notice that these coefficients belong to the algebraic closure $F$
	of the field generated by the coefficients of $f$, and embed $F$ into $\C$.
	Over $\C$, we have classical results in the complex dynamics
	of polynomials (see, for example, \cite[p.~98]{MilnorBook}) which
	motivate the results of this paper. In fact, the inverse $\psi_f^{-1}\in tK[[1/t]]$, which is
	well-defined up to multiplication by a $(d-1)$th-root of unity,
	is called the B\"ottcher coodinate of $f$.
		
	
	The Chebyshev polynomial
	$C_d(t)$ is defined to be the unique polynomial
	of degree $d$ such that 
	$C_d(t+\displaystyle\frac{1}{t})=t^d+\displaystyle\frac{1}{t^d}$.
	When $f(t)=t^d$ or $f(t)=C_d(t)$, we may choose $\psi_f(t)=t$, or $\psi_f(t)=t+\displaystyle\frac{1}{t}$ respectively.
	When $f(t)=-C_d(t)$, we pick any $\mu\in K$ such that $\mu^{d-1}=-1$. Then
	we may choose
	 $\psi_f(t)=\displaystyle\frac{t}{\mu}+\displaystyle\frac{\mu}{t}$.
	Therefore, if $f$ is conjugate to $t^d$ or $\pm C_d(t)$, we have
	that $\psi_f\in K(t)$. 
	
	Following Medvedev-Scanlon \cite[Fact 2.25]{MedSca},
	the polynomial $f$ is said to be disintegrated if
	$f$ is not conjugate to $t^d$
	or $\pm C_d(t)$. 
	In \cite{BB93}, Becker and Bergweiler
	prove that $\psi_f$ is transcendental over $K(t)$ if
	$f$ is disintegrated. Shortly after that, they
	prove the stronger conclusion that $\psi_f$ is
	hypertranscendental \cite{BB95}. On the other hand,
	it is a well-known result in polynomial dynamics that 
	for every disintegrated polynomial $f$ and $g$, there is a choice  
	of $\psi_f$ and $\psi_g$ such that $\psi_f=\psi_g$ if and only if
	$f$ and $g$ have a common iterate. 
%
%
%
	We include a proof of this due to the lack of
	an immediate reference. Let $d$ and $e$ denote the degrees of $f$ and $g$ 
	respectively. By using (\ref{eq:functional}), it is
	easy to prove that if $f$ and $g$ have a common iterate
	then we can make the choice $\psi_f=\psi_g$. 
	Now assume that $\psi_f(t)=\psi_g(t)$, we have 
	$\psi_g(t^d)=\psi_f(t^d)=f\circ \psi_f(t)$.
	Therefore:
	$$\psi_g(t^{de})=g\circ\psi_g(t^d)=g\circ f\circ\psi_f(t).$$
	Interchanging $f$ and $g$, we have:
	$$\psi_f(t^{de})=f\circ g\circ\psi_g(t)$$
	Therefore $g\circ f=f\circ g$. A classical theorem of Ritt \cite[p.~399]{Ritt23}
	concludes that $f$ and $g$ have a common iterate. 
	
  
  We will provide a characterization of disintegrated polynomials
  $f$ and $g$ such that $\psi_f(t)$ and $\psi_g(t)$ 
  are algebraically dependent over
  $K(t)$. Therefore in a certain sense, the results in this paper might be 
  regarded 
  as a
  common extension to results stated in the last paragraph. First, we need
  the following definition from dynamics:
  \begin{definition}
  Let $p(t)$ and $q(t)$ be non-linear polynomials in $K[t]$. We say that 
  $p(t)$ is semi-conjugate to $q(t)$ if there exists a non-constant 
  polynomial $\pi(t)\in K[t]$
  such that $p\circ \pi=\pi\circ q$.
  \end{definition}
  
  In fact, it is more natural 
  to consider algebraic dependence of $\psi_f(\alpha t^{m})$,
  and $\psi_g(\beta t^{n})$ for certain roots of unity $\alpha,\beta$ and positive
  integers $m,n$. For $d\geq 2$, let $\mu_{(d)}$ denote
  the set of all roots of unity $\zeta$ whose order is relatively prime to $d$.
  Write:
  $$M_d=\left\{\zeta t^m:\ \zeta\in \mu_{(d)},\ m\in\Z,\ m>0\right\}.$$
	It is not difficult to prove that $M_d$ is exactly the set of all non-constant 
	polynomials commuting with an iterate of $t^d$. Our main results can now
	be formulated independently from the choices
	of roots of unity in the definition of $\psi_f$:
	
	\begin{theorem}\label{thm:main1}
	Let $f,g\in K[t]$ be disintegrated polynomials of degrees $d,e\geq 2$ 
	respectively. 
	There exist $u(t)\in M_d$, and $v(t)\in M_e$ such that $\psi_f(u(t))$ and 
	$\psi_g(v(t))$
	are algebraically dependent over $K(t)$
	if and only if an iterate of $f$ and 
	an iterate of $g$ are semi-conjugate to a common 
	polynomial.
	\end{theorem}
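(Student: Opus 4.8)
The plan is to translate algebraic dependence of the two Laurent series into a statement about dynamics via the functional equation \eqref{eq:functional}, and then invoke the result of Medvedev-Scanlon on invariant subvarieties of products of one-variable dynamical systems. I would first handle the easy direction. Suppose some iterate $f^{\circ k}$ and some iterate $g^{\circ \ell}$ are semi-conjugate to a common polynomial $h$, say $f^{\circ k}\circ\pi_1=\pi_1\circ h$ and $g^{\circ\ell}\circ\pi_2=\pi_2\circ h$. Note that $\psi_{f^{\circ k}}$ and $\psi_f$ differ only by the substitution $t\mapsto t^{k}$ (up to a root of unity), and similarly for $g$; so after replacing $f,g$ by suitable iterates — which is harmless since the conclusion "an iterate of $f$ and an iterate of $g$ are semi-conjugate to a common polynomial" is insensitive to passing to iterates — I may assume $k=\ell=1$. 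From $f\circ\pi_1=\pi_1\circ h$ one checks, using the uniqueness of the conjugating series up to a root of unity, that $\pi_1\circ\psi_h = \psi_f\circ(\text{monomial times root of unity})$, i.e. $\psi_f(u(t))=\pi_1(\psi_h(t))$ for a suitable $u\in M_d$; likewise $\psi_g(v(t))=\pi_2(\psi_h(t))$ for suitable $v\in M_e$. Then both $\psi_f(u(t))$ and $\psi_g(v(t))$ lie in the field $K(t)(\psi_h(t))$, which has transcendence degree $1$ over $K(t)$, so the two series are algebraically dependent over $K(t)$.

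For the converse, suppose $\psi_f(u(t))$ and $\psi_g(v(t))$ are algebraically dependent over $K(t)$ for some $u\in M_d$, $v\in M_e$. Again, since elements of $M_d$ commute with an iterate of $t^d$, the series $\psi_f(u(t))$ is itself (up to a root of unity) a Böttcher-type conjugacy for an iterate of $f$; so after replacing $f$ and $g$ by appropriate iterates I may assume $u(t)=v(t)=t$ and that $\psi_f,\psi_g$ themselves are algebraically dependent over $K(t)$. Consider the series $\Phi=(\psi_f,\psi_g)\colon \Spec K((1/t)) \to \A^1\times\A^1$; algebraic dependence means the Zariski closure $X$ of its image is a curve. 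The functional equation \eqref{eq:functional} says that $\psi_f(t^{de})=f^{\circ e}(\psi_f(t))$ and $\psi_g(t^{de})=g^{\circ d}(\psi_g(t))$, so $X$ is invariant under the product map $\Psi=f^{\circ e}\times g^{\circ d}$ on $\A^1\times\A^1$: indeed $\Psi$ maps the image point $\Phi(t)$ to $\Phi(t^{de})$, and since the substitution $t\mapsto t^{de}$ is dominant on $K((1/t))$, the closure $X$ is mapped into itself. Moreover $X$ dominates each factor (since $\psi_f,\psi_g$ are non-constant, in fact transcendental over $K$), so $X$ is an irreducible curve admitting dominant maps to both $\A^1=\bP^1\setminus\{\infty\}$ and invariant under $f^{\circ e}\times g^{\circ d}$. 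Because $f,g$ are disintegrated, so are $f^{\circ e}, g^{\circ d}$, and we are exactly in the situation of the Medvedev-Scanlon classification of periodic curves in a product of two disintegrated polynomial dynamical systems: such a curve is, up to the obvious normalizations, the graph of a semi-conjugacy relation between iterates of the two maps. Concretely, \cite[Fact 2.25 and the surrounding structure theory]{MedSca} gives that an $f^{\circ e}\times g^{\circ d}$-invariant curve dominating both factors forces iterates $f^{\circ a}$ and $g^{\circ b}$ that are semi-conjugate to a common polynomial $h$ (the curve being the image of $t\mapsto(\pi_1(t),\pi_2(t))$ for the two semi-conjugacies $\pi_i$). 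Unwinding the reductions, this yields the desired conclusion for the original $f$ and $g$.

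The main obstacle I expect is the precise bookkeeping in the converse direction: extracting from the Medvedev-Scanlon description of invariant curves the clean statement "semi-conjugate to a common polynomial," rather than the a priori weaker or differently-phrased conclusion their theorem supplies (which is typically stated in terms of the curve being a graph $y = \pi(x)$ or a correspondence built from monomials, Chebyshev polynomials, and compositional factorizations). One must check that in the disintegrated case the only surviving possibility is genuinely a common semi-conjugacy, and that the roots of unity and monomial substitutions absorbed into $M_d, M_e$ at the start do not reappear as genuine obstructions — i.e. that they can always be removed by adjusting $h$ and the $\pi_i$. A secondary technical point is justifying that the Zariski closure $X$ of the image of $(\psi_f,\psi_g)$ is genuinely a curve invariant under the product map and not merely mapped into a larger periodic set; this uses that $t\mapsto t^{de}$ induces a dominant (indeed finite flat) self-map of the relevant function field and that $X$ is irreducible because $K((1/t))$ is a field. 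I would also remark that the reduction "pass to iterates" is legitimate in both directions precisely because $M_d$ is the set of polynomials commuting with an iterate of $t^d$, which is the observation highlighted just before the theorem statement.
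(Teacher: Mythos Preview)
Your ``if'' direction is fine and matches the paper's Corollary~\ref{cor:algdep}. The converse, however, has a genuine gap.

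The central error is the functional equation you invoke: you write $\psi_f(t^{de})=f^{\circ e}(\psi_f(t))$ and $\psi_g(t^{de})=g^{\circ d}(\psi_g(t))$. This is false. Equation~\eqref{eq:functional} gives $\psi_f(t^{d^k})=f^{\circ k}(\psi_f(t))$, i.e.\ the exponent must be a \emph{power} of $d$, not a multiple. So the substitution $t\mapsto t^{de}$ does not carry $(\psi_f,\psi_g)$ to $(f^{\circ e}(\psi_f),g^{\circ d}(\psi_g))$, and your curve $X$ is not shown to be invariant under any product map $F\times G$. To make such an argument work you would need a single integer $N$ with $N=d^a=e^b$ for some $a,b$, i.e.\ you would need $d$ and $e$ to have a common power---which is exactly what is not assumed. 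This is not a bookkeeping issue; it is the heart of the difficulty, and the paper devotes a separate argument (Theorem~\ref{thm:samepol}) to establishing precisely that $d$ and $e$ must have a common power before Medvedev--Scanlon can be applied.

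Two secondary problems compound this. First, the reduction ``pass to iterates so that $u=v=t$'' is not justified: replacing $f$ by an iterate leaves $\psi_f$ unchanged (up to a root of unity), so it does nothing to absorb the exponent $m$ in $u(t)=\zeta t^m$. Second, even granting $u=v=t$, algebraic dependence of $\psi_f,\psi_g$ over $K(t)$ does not obviously force algebraic dependence over $K$, so the Zariski closure of $(\psi_f,\psi_g)$ in $\A^2_K$ may well be all of $\A^2$ rather than a curve. The paper avoids this by keeping $t$ as an extra coordinate and working with an invariant hypersurface in $\A^{n+1}$ under $(x_0,\ldots,x_n)\mapsto(x_0^d,f_1(x_1),\ldots,f_n(x_n))$; but again this only makes sense once all the $f_i$ share a common degree, which loops back to the missing common-power step.
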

	 
	We give a similar result in the case of more than 2 
	polynomials \textit{under the condition that all the degrees have a common power}.
	Our first step in proving Theorem \ref{thm:main1}
	is to show that $\deg(f)$ and $\deg(g)$ have a common power, then apply the
	following:
	\begin{theorem}\label{thm:main2}
	Let $n\geq 2$, and let $f_1,\ldots,f_n\in K[t]$ be polynomials
	whose degrees have a common power $d\geq 2$. For $1\leq i\leq n$,
	write $\psi_i=\psi_{f_i}$. There exist 
	$u_1(t),\ldots,u_n(t)\in M_d$ such that $\psi_1(u_1(t)),\ldots,\psi_n(u_n(t))$
	are algebraic dependent over $K(t)$ if and only if
	there exist $1\leq i\neq j\leq n$ such that an iterate of $f_i$
	and an iterate of $f_j$ are semi-conjugate to a common polynomial.
	\end{theorem}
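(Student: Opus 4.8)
\emph{Plan of proof.} The plan is to recast algebraic dependence of the conjugacies as the existence of a proper invariant subvariety for a split polynomial map on affine space, and then to invoke the Medvedev--Scanlon classification of such subvarieties. I would assume throughout that the $f_i$ are disintegrated (if some $f_i$ is conjugate to $t^{d_i}$ or $\pm C_{d_i}$ then $\psi_{f_i}\circ u_i\in K(t)$ and the statement degenerates).

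\emph{Reduction.} First I would replace $f_i$ by $F_i:=f_i^{k_i}$, where $k_i\ge 1$ satisfies $\deg(f_i)^{k_i}=d$; each $F_i$ is again disintegrated, for if $f_i^{k_i}$ were conjugate to $t^d$ or $\pm C_d$ then $\psi_{f_i^{k_i}}\in K(t)$, and since $\psi_{f_i}$ differs from $\psi_{f_i^{k_i}}$ only by precomposition with a root of unity we would get $\psi_{f_i}\in K(t)$, contradicting \cite{BB93}; moreover $\psi_{f_i}(t^d)=F_i(\psi_{f_i}(t))$. Given $u_i(t)=\zeta_i t^{m_i}\in M_d$, I would pick $s\ge 1$ with $u_i(t^{d^s})=u_i(t)^{d^s}$ for every $i$ (possible since each $\ord(\zeta_i)$ is prime to $d$); then $\phi_i:=\psi_{f_i}\circ u_i$ satisfies $\phi_i(t^{d^s})=F_i^{s}(\phi_i(t))$ for all $i$. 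A standard argument (any polynomial relation satisfied by the $\phi_i(t)$, or by $t$ together with the $\phi_i(t)$, is preserved under $t\mapsto t^{d^s}$) shows that the Zariski closure $V\subseteq\A^n$ of the image of $t\mapsto(\phi_1(t),\dots,\phi_n(t))$ is invariant under $\Phi:=F_1^{s}\times\cdots\times F_n^{s}$, and the Zariski closure $W\subseteq\A^{n+1}$ of the image of $t\mapsto(t,\phi_1(t),\dots,\phi_n(t))$ is invariant under $(y\mapsto y^{d^s})\times\Phi$; moreover each coordinate projection of $V$ is dominant, the $\phi_i$ being non-constant. Now $\phi_1,\dots,\phi_n$ are algebraically dependent over $K(t)$ iff $W\subsetneq\A^{n+1}$, and since the power map $y\mapsto y^{d^s}$ admits no nontrivial semiconjugacy relation with a disintegrated polynomial, Medvedev--Scanlon forces $W=\A^1\times V$, so the condition becomes $V\subsetneq\A^n$, i.e.\ algebraic dependence of the $\phi_i$ already over $K$.

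\emph{The ``if'' direction.} Suppose iterates $f_i^{a}$, $f_j^{b}$ are both semi-conjugate to a common polynomial $h$, say $f_i^{a}\circ\pi_1=\pi_1\circ h$ and $f_j^{b}\circ\pi_2=\pi_2\circ h$ with $\pi_1,\pi_2$ non-constant; a degree count gives $\deg h=\deg(f_i^a)=\deg(f_j^b)$. I would then check that $\Xi:=\pi_1\circ\psi_h$ satisfies $\Xi(t^{\deg h})=f_i^{a}(\Xi(t))$, and that the coefficient-by-coefficient analysis which pins down $\psi_f$ up to a root of unity shows, more generally, that any solution of this equation whose leading term has degree $e_1:=\deg\pi_1$ equals $\psi_{f_i^{a}}(\alpha\,t^{e_1})$ for some $\alpha$ with $\alpha^{\deg(f_i^a)-1}=1$; combining with $\psi_{f_i^a}(t)=\psi_{f_i}(\beta t)$ for a suitable root of unity $\beta$, we get $\pi_1(\psi_h(t))=\psi_{f_i}(u_i(t))$ with $u_i(t)=\alpha\beta\,t^{e_1}\in M_d$ (the order of $\alpha\beta$ divides $\deg(f_i^a)-1$, which is prime to $d$). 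Likewise $\pi_2(\psi_h(t))=\psi_{f_j}(u_j(t))$ with $u_j\in M_d$. Taking $u_k(t)=t$ for $k\ne i,j$, the entries $\psi_{f_i}(u_i(t))=\pi_1(\psi_h(t))$ and $\psi_{f_j}(u_j(t))=\pi_2(\psi_h(t))$ are both polynomials in the single series $\psi_h(t)$, so eliminating $\psi_h(t)$ produces a nonzero polynomial relation over $K$; hence the whole tuple is algebraically dependent.

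\emph{The ``only if'' direction, and the main obstacle.} By the reduction, algebraic dependence over $K(t)$ yields a proper irreducible $\Phi$-invariant $V\subsetneq\A^n$ with all coordinate projections dominant. By the Medvedev--Scanlon structure theorem \cite{MedSca} for split disintegrated polynomial maps, after reordering coordinates $V$ is a component of an intersection of pullbacks of invariant plane curves under coordinate projections $\A^n\to\A^2$, and properness of $V$ forces at least one such curve to be nontrivial; thus for some $i\ne j$ there is an irreducible curve $C\subseteq\A^2$, with both projections dominant, invariant under an iterate of $F_i^{s}\times F_j^{s}$. The Medvedev--Scanlon description of invariant plane curves then gives that an iterate of $F_i^{s}$ and an iterate of $F_j^{s}$ are semi-conjugate to a common polynomial, and since iterates of $F_i=f_i^{k_i}$ are iterates of $f_i$, an iterate of $f_i$ and an iterate of $f_j$ are semi-conjugate to a common polynomial, as required. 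The delicate points are the two uses of Medvedev--Scanlon: justifying that the non-disintegrated coordinate $y\mapsto y^{d^s}$ decouples — equivalently, that no invariant plane curve with dominant projections joins a power map to a disintegrated polynomial — which requires care with the mixed case of their theorem, and passing from ``invariant plane curve with dominant projections'' to ``common semiconjugacy target'', which relies on their explicit parametrization of such curves. The remaining ingredients (the functional equations, the normalization of $\psi_f$, and the root-of-unity bookkeeping needed to stay inside $M_d$) are routine.
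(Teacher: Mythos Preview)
Your proof is correct and follows essentially the same route as the paper: both convert the algebraic dependence into an invariant subvariety for the split map on $\A^{n+1}$ (the paper via the hypersurface cut out by the minimal polynomial of $\psi_n(u_n)$, you via the Zariski closure of the parametrized curve) and then invoke Medvedev--Scanlon \cite[Theorem~2.30, Proposition~2.34]{MedSca} to extract an invariant plane curve and hence a common semiconjugacy target; the paper simply absorbs your decoupling of the power-map coordinate into that citation rather than isolating it as a separate step. Your ``coefficient-by-coefficient analysis'' in the ``if'' direction is exactly the content of the paper's Lemma~\ref{lem:D}, from which Proposition~\ref{prop:algdep} and Corollary~\ref{cor:algdep} are derived.
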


	The organization of this paper is as follows. We first exhibit an algebraic
	dependence relation between $\psi_f(u(t))$ and $\psi_g(v(t))$
	for some $u,v\in M_d$ when an iterate of $f$ and an iterate of $g$
	are semi-conjugate to a common polynomial. After that we prove Theorem
	\ref{thm:main1}, and Theorem \ref{thm:main2}
	using the main result of Medvedev-Scanlon \cite{MedSca}.
	Then we provide an application of our results
	to the simplest one-parameter family of rational maps, namely the 
	family $\{t^2+c:\ c\in K\}$. Finally we conclude the paper by proposing some 
	related questions
	in transcendental number theory and arithmetic dynamics.
	
	{\bf Acknowledgments.}  The author would like to thank Tom Tucker and Tom Scanlon
	for many useful suggestions. We are grateful to Patrick Ingram and Joe Silverman for
	helpful conversations involving Section \ref{sec:questions}.

	 \section{Constructing Algebraic Dependence Relations}
	 Let $v_{\infty}$ denote the usual discrete
	 valuation on the field $K((1/t))$ of formal Laurent series in 
	 $1/t$. We have the following useful result:
	 \begin{lemma}\label{lem:D}
	 Let $f\in K[t]$ be a  polynomial of degree $d\geq 2$. Let $D\geq 1$. Let 
	 $L(t)\in K((1/t))$ such that $v_\infty(L)=D$
	 and $L(t^d)=f(L(t))$. Then there exists $\psi\in K((1/t))$
	 such that $v_\infty(\psi)=1$, 
	 $\psi(t^d)=f(\psi(t))$, and $L(t)=\psi(t^D)$.
	 \end{lemma}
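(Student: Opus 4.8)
The plan is to reduce the statement to the existence of a suitable $\psi$ together with a uniqueness assertion for the functional equation. First I would compare leading terms in $L(t^d)=f(L(t))$: if $b_D$ is the leading coefficient of $L$ (so $L=b_Dt^D+\cdots$ with $b_D\neq 0$) and $a_d$ is the leading coefficient of $f$, then $b_D=a_db_D^{d}$, i.e. $b_D^{d-1}=a_d^{-1}$. This is exactly the constraint on the leading coefficient of a solution $\psi\in tK[[1/t]]$ of $\psi(t^d)=f(\psi(t))$: by the discussion in the introduction such solutions form a single orbit under $\psi\mapsto\psi(\zeta t)$ with $\zeta^{d-1}=1$, so their leading coefficients run through all $(d-1)$-th roots of $a_d^{-1}$. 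Hence I may fix a solution $\psi\in tK[[1/t]]$ of $\psi(t^d)=f(\psi(t))$ whose leading coefficient is precisely $b_D$; in particular $v_\infty(\psi)=1$.

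Next, set $\tilde L(t):=\psi(t^D)$. Then $\tilde L(t^d)=\psi(t^{dD})=\psi\bigl((t^D)^d\bigr)=f(\psi(t^D))=f(\tilde L(t))$, so $\tilde L$ satisfies the same functional equation as $L$; moreover $v_\infty(\tilde L)=D$ and $\tilde L$ has the same leading coefficient $b_D$ as $L$. So it suffices to prove that a solution $H\in K((1/t))$ of $H(t^d)=f(H(t))$ with $v_\infty(H)=D$ is determined by its leading coefficient; granting this, $L=\tilde L=\psi(t^D)$ and we take this $\psi$.

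For the uniqueness, suppose $H:=L-\tilde L\neq 0$ and put $N:=v_\infty(H)$; since $L$ and $\tilde L$ agree in degree $D$ we have $N\le D-1$. Write $f(X)-f(Y)=(X-Y)\,g(X,Y)$ with $g(X,Y)=\bigl(f(X)-f(Y)\bigr)/(X-Y)\in K[X,Y]$, whose part of total degree $d-1$ is $a_d\sum_{i=0}^{d-1}X^iY^{d-1-i}$. Subtracting the two functional equations gives $H(t^d)=f(L(t))-f(\tilde L(t))=H(t)\,g\!\left(L(t),\tilde L(t)\right)$. On the left, $v_\infty(H(t^d))=dN$. On the right, substituting $L,\tilde L$ (both with leading term $b_Dt^D$) into $g$ produces leading term $a_d\,d\,b_D^{\,d-1}\,t^{(d-1)D}$, which is nonzero since $\car K=0$; hence $v_\infty\!\left(g(L,\tilde L)\right)=(d-1)D$ and $v_\infty$ of the right-hand side is $N+(d-1)D$. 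Equating, $dN=N+(d-1)D$, so $(d-1)N=(d-1)D$ and $N=D$, contradicting $N\le D-1$. Therefore $H=0$, i.e. $L(t)=\psi(t^D)$, as required.

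The only delicate point is the valuation bookkeeping in the last step: one must verify that $v_\infty$ is multiplicative, that $v_\infty(H(t^d))=d\,v_\infty(H)$, that the top-degree term of $g(L,\tilde L)$ does not cancel (it equals $a_d\,d\,b_D^{\,d-1}t^{(d-1)D}$ with $a_d,b_D,d$ all nonzero in $K$), and that $N\le D-1$. None of this is hard, but the argument genuinely uses $\car K=0$ through $d\neq 0$ in $K$. The case $D=1$ is trivial (take $\psi=L$) and is also covered by the argument above; and, if one prefers not to invoke the introduction, the required existence and uniqueness of $\psi$ with prescribed leading coefficient can be obtained by the same valuation computation applied to two solutions of $\psi(t^d)=f(\psi(t))$ in $tK[[1/t]]$ with equal leading coefficient.
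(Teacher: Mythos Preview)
Your argument is correct. The route, however, differs from the paper's. The paper does not invoke the existence of $\psi_f$ from the introduction at all: it argues directly that $L$ is a Laurent series in $t^D$ by taking the largest exponent $M$ with $D\nmid M$ and $c_M\neq 0$, and comparing the coefficient of $t^{D(d-1)+M}$ on both sides of $L(t^d)=f(L(t))$ to force $d\,b_d\,c_D^{d-1}c_M=0$; then $\psi$ is read off from $L(t)=\psi(t^D)$ a posteriori. You instead build $\tilde L=\psi(t^D)$ first (using the introduction's existence of $\psi_f$ and the orbit description to match the leading coefficient $b_D$) and then prove uniqueness via the clean valuation identity $dN=N+(d-1)D$ for $H=L-\tilde L$. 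Both arguments ultimately rest on the same nonvanishing $d\cdot a_d\cdot b_D^{\,d-1}\neq 0$ in characteristic $0$, but the paper's version is self-contained (it does not assume $\psi_f$ exists), while yours is more conceptual and avoids singling out a specific coefficient. One small quibble: in your final sentence, the valuation computation gives \emph{uniqueness} of $\psi$ with a prescribed leading coefficient; existence still needs the recursive construction sketched in the introduction, not the valuation argument itself.
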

	 
	 \begin{proof}
	 There is nothing to prove when $D=1$. We may assume that $D\geq 2$.
	 We prove the lemma by using contradiction: let $M$ be the largest integer such 
	 that $D$ does not divide $M$ and $c_M\neq 0$. Write:
	 \begin{align*}
	 f(t)&=b_dt^d+\ldots+b_0.\\
	 f(L(t))&=
b_d(c_Dt^D+\ldots+c_Mt^M+\ldots)^d
+b_{d-1}(c_Dt^D+\ldots+c_Mt^M+\ldots)^{d-1}+\ldots
	 \end{align*}
	 
	 By the choice of $M$,
	 the coefficient of $t^{D(d-1)+M}$ in $f(L(t))$
	 is $db_dc_D^{d-1}c_M$. On the other hand, the coefficient of
	 $t^{D(d-1)+M}$ in $L(t^d)$ is either 0 
	 or $c_N$ if 
	 $N= \displaystyle \frac{D(d-1)+M}{d}$ is an integer.
	 In this latter case, we have $c_N=0$
	 since $N$ is not a multiple of $D$, $N>M$, and
	 the maximality of $M$.
	 Therefore, comparing the coefficient of $t^{D(d-1)+M}$, we have:
	 $$db_dc_D^{d-1}c_M=0.$$
	 Hence $c_M=0$, contradiction.
	 
%

   Therefore $L(t)=\psi(t^D)$
	 for some $\psi\in K((1/t))$, $v_\infty(\psi)=1$.	We have:
	 $$f(\psi(t^D))=f(L(t))=L(t^d)=\psi(t^{Dd}).$$
	 Hence $f(\psi(t))=\psi(t^d)$. 
	 \end{proof}
	
	Lemma \ref{lem:D} is used to prove the following:
	\begin{proposition}\label{prop:algdep}
	Let $f,g\in K[t]$ be polynomials of degree $d\geq 2$ that are semi-conjugate
	to a common polynomial $h$. Let $\pi,\rho\in K[t]$ be non-constant
	such that $f\circ \pi=\pi\circ h$, and $g\circ \rho=\rho\circ h$.
	 Then there exists a choice of $\psi_f$ and $\psi_g$
	such that $\psi_f(t^{\deg(\pi)})$ and $\psi_g(t^{\deg(\rho)})$
	are algebraically dependent over $K$.
	\end{proposition}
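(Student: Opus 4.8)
The plan is to exploit the common polynomial $h$ and the semi-conjugacies $\pi,\rho$ to produce a single Laurent series that serves, after rescaling the variable, as a B\"ottcher-type coordinate for both $f$ and $g$. First I would choose a B\"ottcher coordinate $\psi_h \in tK[[1/t]]$ for $h$, say of degree $\delta = \deg(h)$, so that $\psi_h(t^{\delta}) = h(\psi_h(t))$. Then I would set $L_f(t) = \pi(\psi_h(t))$ and $L_g(t) = \rho(\psi_h(t))$. Using $f\circ\pi = \pi\circ h$ and the functional equation for $\psi_h$, one computes
\[
L_f(t^{\delta}) = \pi(\psi_h(t^{\delta})) = \pi(h(\psi_h(t))) = f(\pi(\psi_h(t))) = f(L_f(t)),
\]
and similarly $L_g(t^{\delta}) = g(L_g(t))$. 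Note $v_\infty(L_f) = -\deg(\pi)\cdot v_\infty(\psi_h) = \deg(\pi)$ in the convention where $v_\infty(t)=1$ (I will match the paper's sign convention when writing this up), and likewise $v_\infty(L_g) = \deg(\rho)$.

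Next I would apply Lemma~\ref{lem:D} twice: with $D = \deg(\pi)$ and the relation $L_f(t^{\delta}) = f(L_f(t))$ it yields a series $\psi$ with $v_\infty(\psi) = 1$, $\psi(t^{\delta}) = f(\psi(t))$, and $L_f(t) = \psi(t^{\deg(\pi)})$; and with $D = \deg(\rho)$ it yields $\psi'$ with $\psi'(t^{\delta}) = g(\psi'(t))$ and $L_g(t) = \psi'(t^{\deg(\rho)})$. Here is where I must be slightly careful: Lemma~\ref{lem:D} as stated takes a relation of the form $L(t^d) = f(L(t))$ where $d = \deg(f)$, but our relations use exponent $\delta$, not $\deg(f)$. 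So before invoking the lemma I would either iterate (replace $h$ by an iterate so that $\delta$ is a common power of $d=\deg(f)$ and $e=\deg(g)$, which is automatic since $f\circ\pi = \pi\circ h$ forces $\deg(f) = \deg(h)$), or observe directly that $\deg(f) = \deg(h) = \deg(g) = \delta$ because $\pi,\rho$ are non-constant and composition multiplies degrees. This last observation is the clean fix: semi-conjugacy of non-linear polynomials forces equal degrees, so $d = \delta = e$ and Lemma~\ref{lem:D} applies verbatim. Then $\psi$ satisfies $\psi(t^d) = f(\psi(t))$ with $v_\infty(\psi) = 1$, so $\psi$ is a legitimate choice of $\psi_f$; similarly $\psi'$ is a choice of $\psi_g$.

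Finally I would assemble the dependence relation. We have
\[
\psi_f(t^{\deg(\pi)}) = \psi(t^{\deg(\pi)}) = L_f(t) = \pi(\psi_h(t)), \qquad
\psi_g(t^{\deg(\rho)}) = \psi'(t^{\deg(\rho)}) = L_g(t) = \rho(\psi_h(t)).
\]
Thus both $\psi_f(t^{\deg(\pi)})$ and $\psi_g(t^{\deg(\rho)})$ lie in the field $K(\psi_h(t)) \subseteq K((1/t))$, which has transcendence degree $1$ over $K$ (since $\psi_h$ is a single Laurent series). Two elements of a transcendence-degree-$1$ extension of $K$ are algebraically dependent over $K$, which is the claim. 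I expect the only real obstacle to be bookkeeping: making sure the functional-equation exponent matches $\deg(f)$ so that Lemma~\ref{lem:D} is literally applicable (resolved by the degree equality above), and tracking the valuation/sign convention so that the hypothesis $v_\infty(L) = D \geq 1$ of the lemma is met. Everything else is a direct substitution.
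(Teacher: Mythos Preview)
Your proposal is correct and follows essentially the same route as the paper: compose $\pi$ and $\rho$ with a fixed $\psi_h$, verify the resulting series satisfy $L(t^d)=f(L(t))$ (resp.\ $g$), invoke Lemma~\ref{lem:D} to recognize them as $\psi_f(t^{\deg\pi})$ and $\psi_g(t^{\deg\rho})$ for suitable choices of $\psi_f,\psi_g$, and conclude algebraic dependence over $K$ since both lie in $K(\psi_h)$. Your side discussion about matching the exponent in the functional equation is unnecessary here, since the hypothesis already gives $\deg f=\deg g=d$ and the semi-conjugacy forces $\deg h=d$; no iteration is needed.
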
 
	\begin{proof}
	Fix a choice of $\psi_h(t)$, let $L(t)=\pi(\psi_h(t))$. We have that 
	$v_\infty(L)=\deg(\pi)$, and 
	$L(t^d)=f(L(t))$ thanks to the functional equation $f\circ\pi=\pi\circ h$.
	By Lemma \ref{lem:D}, there is a choice of $\psi_f$ such that
	$\psi_f(t^{\deg(\pi)})=L(t)=\pi(\psi_h(t)).$ Similarly, there is a choice
	of $\psi_g$ such that $\psi_g(t^{\deg(\rho)})=\rho(\psi_h(t))$.
	Therefore $\psi_f(t^{\deg(\pi)})$ and  $\psi_g(t^{\deg(\rho)})$
	are algebraically dependent over $K$ since both are in the field
	$K(\psi_h)$.
	\end{proof} 
	 
	\begin{cor}\label{cor:algdep}
		Let $f,g\in K[t]$ such that
		for some positive integers $n,m$, there exist
		non-constant $\pi,\rho\in K[t]$ and a polynomial $h\in K[t]$
		of degree $\delta\geq 2$ satisfying
		$f^n\circ\pi=\pi\circ h$,
		and $g^m\circ \rho=\rho\circ h$.
		Fix a choice of
		$\psi_f$ and $\psi_g$. Then there exist $(\delta-1)$th roots of unity
		$\alpha$, $\beta$ such that
		$\psi_f((\alpha t)^{\deg(\pi)})$ and 
		$\psi_g((\beta t)^{\deg{\rho}})$
		are algebraically dependent over $K$.
	\end{cor}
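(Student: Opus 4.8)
The plan is to reduce the statement to Proposition~\ref{prop:algdep} applied to the iterates $f^{n}$ and $g^{m}$, and then to keep track of the ambiguity in the choice of B\"ottcher coordinate. First I would observe that comparing degrees in $f^{n}\circ\pi=\pi\circ h$ forces $\deg(f^{n})=\deg(h)=\delta$, and likewise $\deg(g^{m})=\delta$; so $f^{n}$ and $g^{m}$ are polynomials of the common degree $\delta$ that are semi-conjugate to $h$ through $\pi$ and $\rho$. Proposition~\ref{prop:algdep} (together with a glance at its proof) then produces a single choice of $\psi_{h}$ and local conjugacies $\widehat\psi$ for $f^{n}$ and $\widehat\chi$ for $g^{m}$ with $\widehat\psi(t^{\deg(\pi)})=\pi(\psi_{h}(t))$ and $\widehat\chi(t^{\deg(\rho)})=\rho(\psi_{h}(t))$; in particular $\widehat\psi(t^{\deg(\pi)})$ and $\widehat\chi(t^{\deg(\rho)})$ both lie in $K(\psi_{h})$, hence are algebraically dependent over $K$.

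Next I would pass from $\widehat\psi,\widehat\chi$ to the prescribed $\psi_{f},\psi_{g}$. Since $\delta=(\deg f)^{n}$, iterating $\psi_{f}(t^{\deg f})=f(\psi_{f}(t))$ gives $\psi_{f}(t^{\delta})=f^{n}(\psi_{f}(t))$, so $\psi_{f}$ is itself a local conjugacy for $f^{n}$; by the uniqueness statement recalled in the introduction, $\widehat\psi(t)=\psi_{f}(\gamma t)$ for some $(\delta-1)$th root of unity $\gamma$, and similarly $\widehat\chi(t)=\psi_{g}(\theta t)$ with $\theta^{\delta-1}=1$. Substituting, $\psi_{f}(\gamma t^{\deg(\pi)})$ and $\psi_{g}(\theta t^{\deg(\rho)})$ are algebraically dependent over $K$; so if $\alpha,\beta$ are $(\delta-1)$th roots of unity with $\alpha^{\deg(\pi)}=\gamma$ and $\beta^{\deg(\rho)}=\theta$, then $(\alpha t)^{\deg(\pi)}=\gamma t^{\deg(\pi)}$ and $(\beta t)^{\deg(\rho)}=\theta t^{\deg(\rho)}$, and we are done.

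The only step I expect to need real care is this last one, since a priori $\gamma$ need not be a $\deg(\pi)$th power among the $(\delta-1)$th roots of unity (and likewise for $\theta,\deg(\rho)$). When $f$ (equivalently $g$, by the rigidity of semi-conjugacies between polynomials) is not disintegrated this is moot: $\psi_{f},\psi_{g}\in K(t)$ by the discussion in the introduction, so $\psi_{f}((\alpha t)^{\deg(\pi)})$ and $\psi_{g}((\beta t)^{\deg(\rho)})$ automatically lie in the transcendence-degree-one extension $K(t)/K$ and are dependent for any choice of $\alpha,\beta$. When $f,g$ are disintegrated, I would instead first replace $(n,m,h)$ by $(nk,mk,h^{k})$ for a suitable $k$: the hypotheses persist (one checks $f^{nk}\circ\pi=\pi\circ h^{k}$ by induction, and $\psi_{f}$ is still a local conjugacy for $f^{nk}$), the first two paragraphs go through with $\delta$ replaced by $\delta^{k}$, and a lifting-the-exponent estimate for $\delta^{k}-1$ shows that once $k$ is divisible by a high enough power of each prime dividing $\gcd(\ord(\gamma),\deg(\pi))$ or $\gcd(\ord(\theta),\deg(\rho))$ the required $\alpha,\beta$ exist. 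This only enlarges the set of permissible roots of unity, and for the uses of the corollary in Theorems~\ref{thm:main1} and \ref{thm:main2} one needs only the membership $(\alpha t)^{\deg(\pi)},(\beta t)^{\deg(\rho)}\in M_{\delta}$, which holds since these roots of unity have order dividing $\delta^{k}-1$ and hence prime to $\delta$.
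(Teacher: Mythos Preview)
Your first two paragraphs reproduce the paper's proof exactly: apply Proposition~\ref{prop:algdep} to the iterates $f^{n}$ and $g^{m}$ (both of degree $\delta$, by your degree comparison), obtain particular conjugacies $\widehat\psi,\widehat\chi$ with $\widehat\psi(t^{\deg\pi}),\widehat\chi(t^{\deg\rho})\in K(\psi_h)$, and then use that the fixed $\psi_f,\psi_g$ are themselves local conjugacies for $f^{n},g^{m}$, hence differ from $\widehat\psi,\widehat\chi$ by precomposition with $(\delta-1)$th roots of unity $\gamma,\theta$. The paper stops right there and declares the proof finished.

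Your third paragraph goes beyond the paper and flags a genuine wrinkle. What the argument actually yields is $\psi_f(\gamma\,t^{\deg\pi})$ and $\psi_g(\theta\,t^{\deg\rho})$; writing these as $\psi_f((\alpha t)^{\deg\pi})$ and $\psi_g((\beta t)^{\deg\rho})$ with $\alpha,\beta\in\mu_{\delta-1}$ requires $\gamma,\theta$ to be $\deg(\pi)$th (resp.\ $\deg(\rho)$th) powers inside $\mu_{\delta-1}$, which need not hold. Your remedy of replacing $h$ by $h^{k}$ is sound: the same $\widehat\psi$ and hence the same $\gamma$ work for every $k$, and a lifting-the-exponent argument on $\delta^{k}-1$ lets you choose $k$ so that $\gamma$ becomes a $\deg(\pi)$th power in the larger cyclic group $\mu_{\delta^{k}-1}$ (and similarly for $\theta$). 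The price is that $\alpha,\beta$ are only $(\delta^{k}-1)$th roots of unity, but as you observe this is harmless for Theorems~\ref{thm:main1} and~\ref{thm:main2}, where one only needs $(\alpha t)^{\deg\pi}\in M_d$. The simplest reading is that the statement of the corollary is meant to say $\psi_f(\alpha\,t^{\deg\pi})$ rather than $\psi_f((\alpha t)^{\deg\pi})$, which is exactly what both the paper's proof and your first two paragraphs establish directly. One small caution: your aside that the non-disintegrated case is ``moot'' because then $\psi_f,\psi_g\in K(t)$ implicitly assumes that non-disintegratedness of $f$ forces the same for $g$; this is plausible but not as immediate as you suggest, and in any case is unnecessary once one accepts the form $\psi_f(\alpha\,t^{\deg\pi})$.
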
 
	 \begin{proof}
	 	By Proposition \ref{prop:algdep}, there is a choice of $\psi_{f^m}$
	 	and $\psi_{g^n}$ such that $\psi_{f^m}(t^{\deg(\pi)})$
	 	and $\psi_{g^n}(t^{\deg(\rho)})$ are algebraically dependent over $K$.
	  Since $\psi_f$ and $\psi_g$ are other choices for $\psi_{f^m}$
	  and $\psi_{g^n}$ respectively, there exist $(\delta-1)$th roots of unity
	  $\alpha$ and $\beta$ such that $\psi_{f^m}(t)=\psi_f(\alpha t)$
	  and $\psi_{g^n}(t)=\psi_g(\beta t)$. This finishes the proof.
	 \end{proof}
	
	\begin{remark}\label{rem:overK}
		Note that Proposition \ref{prop:algdep} and Corollary \ref{cor:algdep}
		provide algebraic dependence over $K$, not just $K(t)$. 
	\end{remark}
	
	\section{Proof of Theorem \ref{thm:main2}}
	If an iterate of $f_i$ and an iterate of $f_j$ are semi-conjugate to a common
	polynomial then we apply Corollary \ref{cor:algdep} (see
	Remark \ref{rem:overK}). It remains to
	prove the ``only if'' part.
	Replace $d$ by an appropriate power such that each $u_i(t)$ commutes
	with $t^d$. Replacing each
	 $f_i$ by
	an iterate, we may assume that all the polynomials $f_1,\ldots,f_n$ have
	a common degree $d$. By relabeling and removing the $f_i$, and lowering $n$
	if necessary, we may assume
	that $\psi_{1}(u_1(t)),\ldots,\psi_{n-1}(u_{n-1}(t))$
	are algebraically independent over $K(t)$ and 
	$\psi_n(u_n(t))$ is algebraic of degree $D\geq 1$ over 
	$K(t,\psi_1(u_1(t)),\ldots,\psi_{n-1}(u_{n-1}(t)))$.
%
%
  We have the following irreducible polynomial in $K[X_0,X_1,\ldots,X_n]$:
$$P(X_0,X_1,\ldots,X_n)=P_D(X_0,\ldots,X_{n-1})X_n^D+\ldots+P_0(X_0,\ldots,X_{n-1})$$
		such that $P(t,\psi_1(u_1(t)),\ldots,\psi_n(u_n(t)))=0$ and $\gcd(P_D,\ldots,P_0)=1$. In fact, $P$ is the minimal polynomial of
		$\psi_n(u_n)$ over
		$$K[X_0,\ldots,X_{n-1}]\cong K[t,\psi_1(u_1),\ldots,\psi_{n-1}(u_{n-1})].$$
	
	Replacing $t$ by $t^d$ and using the identities $\psi_i(t^d)=f_i(\psi_i)$,
	and $u_i(t^d)=(u_i(t))^d$ for each $i$, we have:
	$$P(t^d,f_1(\psi_1(u_1)),\ldots,f_n(\psi_n(u_n)))=0.$$	
	Therefore, we have that $P(X_0,\ldots,X_n)$ is a factor of 
	$P(X_0^d,f_1(X_1),\ldots,f_n(X_n))$. Consider the self-map
	$\varphi$ of $\A^{n+1}$ given by:
	$$\varphi(x_0,\ldots,x_n)=(x_0^n,f_1(x_1),\ldots,f_n(x_n)).$$
	Let $V$ be the hypersurface defined by $P(X_0,\ldots,X_n)=0$, then we have
	$V\subseteq \varphi^{-1}(V)$. In other words, $V$ is an invariant hypersurface
	in the dynamical system $(\A^n,\varphi)$. For each $1\leq i\neq j\leq n$,
	let $\pi_{i,j}$ denote the projection from $\A^n$
	onto the $(i,j)$-factor $\A^2$.
	By the main theorem of Medvedev-Scanlon
	\cite[Theorem 2.30]{MedSca}, there exists $1\leq i\neq j\leq n$ such that
	  $\pi_{i,j}(V)$ is an invariant curve
	  of $\A^2$ under the self-map $(f_i,f_j)$, and: 
	  $$V=\pi_{i,j}^{-1}(\pi_{i,j}(V))\cong\pi_{i,j}(V)\times \A^{n-1}$$
%
	 	
	Since $X_n$ cannot be the only variable that appears in $P$, there are at least
	2 variables appearing in $P$. Hence the invariant curve $\pi_{i,j}(V)$
	cannot map to a single point in each factor $\A^1$ of $\A^2$. By 
	\cite[Proposition 2.34]{MedSca}, the polynomials $f_i$ and $f_j$
	are semi-conjugate to a common polynomial. This finishes the proof.
	
	\begin{remark}\label{rem:simplecase}
		It follows from the proof that if  $f_1,\ldots,f_n$ have the same degree $d$,     $u_1,\ldots,u_n$ commute with $t^d$, and $\psi_1(u_1),\ldots,\psi_n(u_n)$
		are algebraically dependent over $K(t)$ then there exist $1\leq i\neq j\leq n$
		such that $f_i$ and $f_j$ are semi-conjugate to a common polynomial.
	\end{remark}
	
	\begin{remark}\label{rem:moregeneral}
		The arguments here can be used in a more general setting, as follows. Let
		$f_0(t),\ldots,f_n(t)$ be polynomials, let $\psi_1(t),\ldots,\psi_n(t)$
		be formal series such that $\psi_i(f_0(t))=f_i(\psi_i(t))$ for
		every $1\leq i\leq n$. Then algebraic dependence of $\psi_1,\ldots,\psi_n$
		over $K(t)$ would give rise to an invariant hypersurface
		of the dynamical system $(\A^{n+1},\varphi)$ where
		$\varphi=(f_0,\ldots,f_n)$. Then the main result of
		Medvedev-Scanlon can be applied. 
	\end{remark}
	
	\section{Proof of Theorem \ref{thm:main1}}
	The ``if'' part follows immediately from Corollary \ref{cor:algdep}. Before proving
	the ``only if'' part, we prove the following theorem which will also be needed 
	later. We let $\tilde{f}$ denote a polynomial of smallest degree
	such that $f$ is an iterate of $\tilde{f}$.
	Write $\delta=\deg(\tilde{f})$. It is
	well-known that
	every polynomial commuting with an iterate of $f$ must have the form
	$\tilde{f}^n\circ l$ where $l$ is a linear polynomial commuting with an iterate of 
	$f$ (see, for example, \cite[Proposition 2.3]{splitv2} and 
	\cite[p.~12]{splitv2}). Therefore, the
	degree of every such
	polynomial is a power of $\delta$.
	
	\begin{theorem}\label{thm:samepol}
		Assume $n\geq 2$, and $\alpha_1(t),\ldots,\alpha_n(t)\in M_d(t)$ such 
		that the functions
		$\psi_f(\alpha_1(t)),\ldots,\psi_f(\alpha_n(t))$ are algebraically
		dependent over $K(t)$. Then there exist $1\leq i\neq j\leq n$ such that
		$\displaystyle\frac{\deg(\alpha_i)}{\deg(\alpha_j)}$
		has the form $\delta^m$ for some positive integer $m$.
%
	\end{theorem}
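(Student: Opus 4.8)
The plan is to imitate the proof of Theorem~\ref{thm:main2}, now keeping track of degrees, and then to read off the degree relation from the structure of semiconjugacies between disintegrated polynomials.

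\emph{Reductions.} Exactly as in the proof of Theorem~\ref{thm:main2}, after replacing $d$ by a suitable power and $f$ by the corresponding iterate I may assume $\deg(f)=d$ and that each $\alpha_i$ commutes with $t^d$; this leaves $\tilde f$, hence $\delta$, unchanged, and $\psi_f$ is still a valid choice of B\"ottcher coordinate. Passing to a minimal algebraically dependent subsystem and relabelling, I may assume $\psi_f(\alpha_1(t)),\dots,\psi_f(\alpha_{n-1}(t))$ are algebraically independent over $K(t)$ while $\psi_f(\alpha_n(t))$ is algebraic over $K(t,\psi_f(\alpha_1(t)),\dots,\psi_f(\alpha_{n-1}(t)))$. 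Each $\psi_f(\alpha_l(t))$ is transcendental over $K(t)$: since $K(t)$ is algebraic over $K(\alpha_l(t))$ and $\psi_f$ is transcendental over $K(t)$, the series $\psi_f(\alpha_l(t))$ cannot be algebraic over $K(t)$. Using $\psi_f(\alpha_l(t^d))=\psi_f(\alpha_l(t)^d)=f(\psi_f(\alpha_l(t)))$ and arguing as in Theorem~\ref{thm:main2}, the point $(t,\psi_f(\alpha_1(t)),\dots,\psi_f(\alpha_n(t)))$ lies on an irreducible invariant hypersurface $V\subseteq\A^{n+1}$ for $\varphi=(x_0^d,f(x_1),\dots,f(x_n))$.

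\emph{Producing an invariant curve.} By \cite[Theorem~2.30]{MedSca}, applied as in Theorem~\ref{thm:main2}, there is a pair of coordinates $i\neq j$ with $\pi_{i,j}(V)$ an invariant curve and $V=\pi_{i,j}^{-1}(\pi_{i,j}(V))$. The coordinate $0$ cannot occur in this pair, since $\pi_{0,l}(V)=\overline{\{(t,\psi_f(\alpha_l(t)))\}}$ is two-dimensional ($\psi_f(\alpha_l(t))$ being transcendental over $K(t)$); and no pair with both indices in $\{1,\dots,n-1\}$ can occur, since the two corresponding series are algebraically independent over $K(t)$, hence over $K$. So the pair is $(i,n)$ with $1\le i\le n-1$, and $C:=\pi_{i,n}(V)=\overline{\{(\psi_f(\alpha_i(t)),\psi_f(\alpha_n(t)))\}}$ is an irreducible curve invariant under $(f,f)$ with both projections dominant. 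By \cite[Proposition~2.34]{MedSca} (and the underlying classification of invariant curves), $C$ has a single place at infinity and a generically injective polynomial parametrization $s\mapsto(\pi(s),\rho(s))$, for which there is a polynomial $h$ with $f\circ\pi=\pi\circ h$ and $f\circ\rho=\rho\circ h$. Since the parametrizing coordinate lies in $K(C)\cong K(\psi_f(\alpha_i(t)),\psi_f(\alpha_n(t)))\subseteq K((1/t))$, there is $s(t)\in K((1/t))$ with $\psi_f(\alpha_i(t))=\pi(s(t))$, $\psi_f(\alpha_n(t))=\rho(s(t))$, and $v_\infty(s(t))\ge 1$. Comparing $v_\infty$ and using $v_\infty(\psi_f(\alpha_l(t)))=\deg(\alpha_l)$ gives $\deg(\alpha_i)=\deg(\pi)\,v_\infty(s(t))$ and $\deg(\alpha_n)=\deg(\rho)\,v_\infty(s(t))$, hence
$$\frac{\deg(\alpha_i)}{\deg(\alpha_n)}=\frac{\deg(\pi)}{\deg(\rho)}.$$

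\emph{Reading off the degree.} First, $h$ is disintegrated: otherwise $\psi_h\in K(t)$, so $L:=\pi(\psi_h(t))\in K(t)$; but $v_\infty(L)=\deg(\pi)$ and $L(t^d)=\pi(h(\psi_h(t)))=f(L(t))$, so by Lemma~\ref{lem:D} $L(t)=\psi'(t^{\deg(\pi)})$ for a B\"ottcher coordinate $\psi'$ of $f$, forcing $\psi'\in K(t)$ and hence $\psi_f\in K(t)$, contradicting that $f$ is disintegrated. Now invoke the structure of semiconjugacies between disintegrated polynomials \cite{MedSca}: among all $\sigma$ with $f\circ\sigma=\sigma\circ h$ the degrees are all multiples of the least one, and each such $\sigma$ factors through a fixed least-degree one $\rho_0$ as $\sigma=\theta_\sigma\circ\rho_0$. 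Writing $\pi=\theta_\pi\circ\rho_0$, $\rho=\theta_\rho\circ\rho_0$, the identities $f\circ\theta_\bullet\circ\rho_0=\theta_\bullet\circ\rho_0\circ h=\theta_\bullet\circ f\circ\rho_0$ and right-cancellation by the non-constant $\rho_0$ give $f\circ\theta_\pi=\theta_\pi\circ f$ and $f\circ\theta_\rho=\theta_\rho\circ f$. By the fact recalled just before the statement, a polynomial commuting with $f$ has degree a power of $\delta$, so
$$\frac{\deg(\alpha_i)}{\deg(\alpha_n)}=\frac{\deg(\pi)}{\deg(\rho)}=\frac{\deg(\theta_\pi)}{\deg(\theta_\rho)}$$
is a power of $\delta$; after interchanging $i$ and $n$ if necessary this is $\delta^m$ for an integer $m\ge 0$, and $m\ge 1$ as soon as the two degrees differ.

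\emph{Main obstacle.} Every step except the last paragraph is a routine adaptation of the proof of Theorem~\ref{thm:main2} together with valuation bookkeeping. The substantive point is that \emph{all} polynomials semiconjugating $h$ to $f$ are controlled by a single least-degree one among them---equivalently, that the set of their degrees has the form $\{d_0\delta^j:j\ge 0\}$. This is exactly the Ritt-type structure theory for semiconjugate disintegrated polynomials encoded in \cite{MedSca}, and verifying that the cited results supply it in the form used above is the delicate part of the argument.
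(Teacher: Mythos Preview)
Your reductions and the construction of the invariant hypersurface $V$ under $\varphi=(x_0^d,f,\ldots,f)$ match the paper's argument. The divergence is in what you extract from Medvedev--Scanlon. The paper invokes not only \cite[Theorem~2.30]{MedSca} but also \cite[Theorem~6.24]{MedSca}, which in the present diagonal situation (all non-zeroth coordinate maps equal to the same disintegrated $f$) yields directly that the minimal polynomial $P$ has the form $X_i-Q(X_j)$ for some $1\le i\neq j\le n$ and some polynomial $Q$ commuting with $f$. From $\psi_f(\alpha_i)=Q(\psi_f(\alpha_j))$ and $v_\infty$ one reads off $\deg(\alpha_i)/\deg(\alpha_j)=\deg(Q)$, which is a power of $\delta$ by the paragraph preceding the theorem. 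That is the entire proof.

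You instead stop at \cite[Proposition~2.34]{MedSca}, obtaining only a parametrization $(\pi,\rho)$ of the curve and semiconjugacies $f\circ\pi=\pi\circ h$, $f\circ\rho=\rho\circ h$, and then try to recover the degree ratio from a claimed structural fact: every semiconjugacy from $h$ to $f$ factors through one of minimal degree $\rho_0$, so that $\pi=\theta_\pi\circ\rho_0$ and $\rho=\theta_\rho\circ\rho_0$ with $\theta_\pi,\theta_\rho$ commuting with $f$. As you yourself flag, this factorization statement is not the content of Proposition~2.34, and pinning it down in \cite{MedSca} is not straightforward; the cleanest form of it in the diagonal case $(f,f)$ \emph{is} precisely Theorem~6.24, whose conclusion is that the invariant curve is the graph of a $Q$ commuting with $f$. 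So your detour ultimately requires the same input the paper cites, and the intermediate machinery (the parametrization, the valuation bookkeeping for $s(t)$, the argument that $h$ is disintegrated, the right-cancellation step) can all be removed by citing Theorem~6.24 at the outset.
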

	\begin{proof}
		Replacing $f$ by an iterate, and hence $d$ by its power such that 
		 $\alpha_i(t)$ commutes with $t^d$ for every $1\leq i\leq n$.
		As in the proof of Theorem \ref{thm:main2} in the previous section, we may assume
		that $\psi_f(\alpha_1),\ldots,\psi_f(\alpha_{n-1})$
		form a  transcendence basis of
		$K(t,\psi_f(\alpha_1),\ldots,\psi_f(\alpha_n))$
		over $K(t)$, and 
		let $P(X_0,\ldots,X_{n})$ be the minimal polynomial of $\psi_f(\beta)$
		over $K[X_0,\ldots,X_{n-1}]\cong K[t,\psi_f(\alpha_1),\ldots,\psi_f(\alpha_{n-1})]$. 
		As before,
		we have that the hypersurface $V$ in $\A^{n+1}$ defined by
		$P(X_0,\ldots,X_n)=0$ is invariant under the self-map
		$$\varphi(x_0,x_1,\ldots,x_n)\colon=(x_0^d,f(x_1),\ldots,f(x_n)).$$
		
		By Theorem 2.30 and Theorem 6.24 of Medvedev-Scanlon \cite{MedSca},
		there exists $1\leq i\neq j\leq n$ such that 
		$P(X_0,X_1,\ldots,X_n)$ is $X_i-Q(X_j)$ where
		$Q$ is a polynomial commuting with $f$. Therefore we must have
		$\psi_f(\alpha_i)=Q(\psi_f(\alpha_j))$. Applying the valuations $v_\infty$, we have
		$\deg(\alpha_i)=\deg(Q)\deg(\alpha_j)$. Therefore
		$$\frac{\deg(\alpha_i)}{\deg(\alpha_j)}=\deg(Q)$$
		is a power of $\delta$.		
	\end{proof}
	
	\begin{remark}
    In a sense, Theorem \ref{thm:samepol}
		provides a complement to Theorem \ref{thm:main2}
		by treating the case $f_1=\ldots=f_n$.	
	\end{remark}
	
	We can now prove Theorem \ref{thm:main1} as follows. By Theorem \ref{thm:main2},
	it suffices to show that $d=\deg(f)$ and $e=\deg(g)$ have a common
	power. Replacing $f$ and $g$ by appropriate iterates so that $d$ and $e$ are 
	replaced
	by the corresponding powers, we may assume that $u(t)$ and $v(t)$, 
	respectively, 
	commute
	with $t^d$ and $t^e$. Since $\psi_f(u(t))$ and $\psi_g(v(t))$ are algebraically
	dependent over $K(t)$, so are $\psi_f(u(t^e))$
	and $\psi_g(v(t^e))$. Since $\psi_g(v(t^e))=g(\psi_g(v(t)))$
	and $\psi_f(u(t))$
	are algebraically dependent over $K(t)$, 
	so are $\psi_f(u(t^e))$ and $\psi_f(u(t))$.
	By Theorem \ref{thm:samepol}, $e=\displaystyle\frac{\deg(u(t^e))}{\deg(u(t))}$
	has a common power with $d$. This finishes the proof of Theorem \ref{thm:main1}.
	
	\section{An Example}
	Our main results are useful since they can be combined to
	the classification of semi-conjugate polynomials given in the appendix
	of Inou's paper
	\cite[Appendix A]{Inou}
	using results of Ritt \cite{Ritt22} and Engstrom \cite{Engstrom}.
	For example, we consider the simplest one-parameter family of
	rational maps: $f_c(t)=t^2+c$, $c\in K$. Write $\psi_c(t)=\psi_{f_c}(t)$.
	We have:
	\begin{cor}
	 Elements of $\{\psi_c(t):\ c\neq 0,-2\}$ 
	are algebraically independent over $K(t)$.
	\end{cor}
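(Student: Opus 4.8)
The plan is to reduce the statement, via Remark~\ref{rem:simplecase}, to the purely dynamical assertion that two distinct members $t^{2}+c$ and $t^{2}+c'$ of the family with $c,c'\notin\{0,-2\}$ can never be simultaneously semi-conjugate to one and the same polynomial, and then to settle that assertion. Concretely, algebraic independence of the infinite set $\{\psi_c:c\neq 0,-2\}$ over $K(t)$ just means that every finite subset is algebraically independent, so it suffices to take distinct $c_1,\dots,c_n\in K\setminus\{0,-2\}$ with $n\geq 2$ and prove $\psi_{c_1},\dots,\psi_{c_n}$ algebraically independent over $K(t)$. Each $f_{c_i}=t^{2}+c_i$ is disintegrated, since $t^{2}+c$ is linearly conjugate to $t^{2}$ exactly for $c=0$ and to $\pm C_2(t)$ exactly for $c=-2$. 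If $\psi_{c_1},\dots,\psi_{c_n}$ were algebraically dependent over $K(t)$, then, taking $u_i(t):=t\in M_2$ (which commutes with $t^{2}$), Remark~\ref{rem:simplecase} would produce indices $i\neq j$ such that $f_{c_i}$ and $f_{c_j}$ are both semi-conjugate to a common polynomial $h$; note $c_i\neq c_j$.

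Next I would identify $h$. From $f_{c_i}\circ\pi=\pi\circ h$ with $\pi$ non-constant, a degree count gives $\deg h=2$. Moreover $h$ must be disintegrated: if $h$ were linearly conjugate to $t^{2}$ or to $\pm C_2$, then $\psi_h\in K(t)$ by the explicit formulas in Section~\ref{sec:intro}, and the construction in the proof of Proposition~\ref{prop:algdep} (an application of Lemma~\ref{lem:D}) would yield $\psi_{c_i}\bigl(t^{\deg\pi}\bigr)=\pi(\psi_h(t))\in K(t)$; as the Laurent expansion of the left-hand side only involves exponents divisible by $\deg\pi$, this forces $\psi_{c_i}\in K(t)$, contradicting the transcendence of $\psi_{c_i}$ over $K(t)$ \cite{BB93}. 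Conjugating $h$ into its normal form, I may thus assume $h=t^{2}+b$ with $b\notin\{0,-2\}$, and the claim reduces to: \emph{if $f_c\circ\pi=\pi\circ f_b$ for some non-constant $\pi\in K[t]$, then $c=b$}. Applying this with $c=c_i$ and with $c=c_j$ gives $c_i=b=c_j$, the desired contradiction.

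To establish that last implication I would first note, by comparing leading coefficients, that $\pi$ is monic, of some degree $k$. Then $\pi$ carries each fixed point $\beta$ of $f_b$ to a fixed point $\gamma$ of $f_c$, and differentiating $f_c(\pi(t))=\pi(f_b(t))$ to the order to which $\pi(t)-\gamma$ vanishes at $\beta$ shows that the multiplier of $\gamma$ under $f_c$ is a power of the multiplier of $\beta$ under $f_b$. Since for a quadratic polynomial $t^{2}+a$ the parameter $a$ is recovered from the multiplier $\lambda$ of any one of its fixed points through $a=\tfrac{\lambda}{2}-\tfrac{\lambda^{2}}{4}$, this already forces $c=b$ when $k=1$, and running the same argument on periodic cycles of large prime period — only finitely many of which can meet the zero set of $\pi'$ — should take care of general $k$ via multiplier rigidity. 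A cleaner alternative, and the one the surrounding discussion suggests, is to quote directly the classification of polynomial semi-conjugacies in the appendix of \cite{Inou}, built on the theorems of Ritt \cite{Ritt22} and Engstrom \cite{Engstrom}: for disintegrated degree-$2$ polynomials it shows that being semi-conjugate to a common polynomial forces linear conjugacy, hence $c=b$.

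I expect the genuine obstacle to be precisely this final step: controlling the equation $f_c\circ\pi=\pi\circ f_b$ for $\pi$ of \emph{arbitrary} degree. The multiplier computation handles it transparently whenever $\pi'$ does not vanish at a fixed point of $f_b$ (in particular whenever $\deg\pi\leq 2$), but the general case needs either the passage to higher-period cycles together with a rigidity statement for multiplier spectra, or the structural input of Ritt's decomposition theory packaged in the cited classification.
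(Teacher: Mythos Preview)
Your reduction via Remark~\ref{rem:simplecase} and the normalization $h=t^{2}+b$ match the paper exactly, and your side argument that $h$ must itself be disintegrated is correct. The gap is precisely where you locate it: showing that $f_c\circ\pi=\pi\circ f_b$ with $\pi$ non-constant forces $c=b$. Your multiplier computation at a fixed point $\beta$ gives $f_c'(\pi(\beta))=(f_b'(\beta))^{e}$ with $e$ the local ramification index, and even after passing to large-period cycles away from the ramification locus you would only obtain that \emph{some} periodic multiplier of $f_c$ equals \emph{some} periodic multiplier of $f_b$; turning that into $c=b$ still requires a nontrivial rigidity statement you have not supplied. The blanket appeal to the classification in \cite{Inou} is likewise too vague to stand as a proof; the paper in fact invokes only one specific result from that appendix, and only in a special subcase.

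The paper closes the gap by a short descent on $\deg\pi$. From $\pi(t)^{2}+c=\pi(t^{2}+b)$ one reads off $\pi(-t)^{2}=\pi(t)^{2}$, hence $\pi(-t)=\pm\pi(t)$. If $\pi(-t)=-\pi(t)$ then $\deg\pi$ is odd, so $\gcd(\deg\pi,2)=1$, and \cite[Theorem~8]{Inou} forces $f_c$ to be conjugate to $t^{2}$ or $\pm C_2$, contradicting $c\notin\{0,-2\}$. If $\pi(-t)=\pi(t)$, write $\pi(t)=P(t^{2})$; substituting $s=t^{2}$ gives $f_c\circ P=P\circ\tilde h$ with $\tilde h(s)=(s+b)^{2}$, and the linear change $s\mapsto s-b$ conjugates $\tilde h$ back to $t^{2}+b$, producing a semi-conjugacy of degree $\tfrac{1}{2}\deg\pi$. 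Taking $\pi$ of minimal positive degree with $b\neq c$ yields a contradiction, so $b=c$. Since this descent works uniformly for every $b$, your separate disintegratedness step, while correct, is unnecessary.
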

	\begin{proof}
		Let $c\notin\{0,-2\}$, we claim that if $f_c(t)$ is semi-conjugate to $h(t)$
		then $f_c$ and $h$ are conjugate to each other. Let $\pi\in K[t]$ be
		non-constant such that $f_c\circ \pi=\pi\circ h$. Replacing $\pi$ by
		$\pi\circ l$ and $h$ by $l^{-1}\circ h \circ l$, we may assume
		that $h(t)=t^2+\tilde{c}$, for some $\tilde{c}\in K$. We prove by
		contradiction that
		$\tilde{c}=c$, hence establishing the claim.
		
		Assume that $\pi$ has the smallest (positive) degree such that there exists 
		$\tilde{c}\neq c$, and $f_c\circ \pi=\pi\circ h$ for
		$h(t)=t^2+\tilde{c}$. From $\pi(t)^2+c=\pi(t^2+\tilde{c})$, we have that 
		$\pi(-t)=\pm \pi(t)$. If $\pi(-t)=-\pi(t)$ then $\deg(\pi)$ would be odd,
		hence $\gcd(\deg(\pi),\deg(f_c))=1$. By \cite[Theorem 8]{Inou},
		$f_c$ is conjugate to $t^2$ or $t^2-2$. This is impossible when 
		$c\notin\{0,-2\}$. 
		
		Therefore $\pi(-t)=\pi(t)$. Write $\pi(t)=P(t^2)$. We have 
		$(P(t^2))^2+c=P(t^4+2\tilde{c} t^2+\tilde{c}^2)$. Hence $f_c\circ P=P\circ \tilde{h}$,
		where $\tilde{h}(t)=(t+c)^2$. Let $l$ be a linear polynomial such that
		$l^{-1}\circ h\circ l=t^2$. Then $\tilde{\pi}=\pi\circ l$ violates
		the choice of $\pi$, contradiction. This proves the claim at the beginning.
		
		For $c,d\notin\{0,-2\}$
		and $c\neq d$, the polynomials $f_c$ and $f_d$ are not
		conjugate to each other. Therefore, they cannot be semi-conjugate to
		a common polynomial by the claim at the beginning. 
		By Remark \ref{rem:simplecase},
		the functions $\{\psi_c:\ c\neq 0,-2\}$ are algebraically independent
		over $K(t)$. 
		\end{proof}
	
	\section{Some Related Questions}\label{sec:questions}
		\subsection{More than two polynomials.} We expect Theorem 
		\ref{thm:main2} still holds without the extra restriction on the
		degrees. In other words,
		 Theorem \ref{thm:main1} holds for more than two polynomials. However,
		 it does not seem to follow easily from the arguments given in
		 this paper.
		 
		 \subsection{Mahler's method and algebraic independence of values.} For the
		 rest of this paper, we assume $K=\bar{\Q}$ with a fixed 
		 embedding into $\C$.
		 Becker and
		 Bergweiler \cite{BB93} also
		 prove that for every disintegrated $f$, for every $a\in \bar{\Q}\subset \C$
		 such that $|a|$ is sufficiently large,
		  $\psi_f(a)$ is transcendental. Their proof uses ``Mahler's method''
		  in which the transcendence of $\psi_f$ is the first important step
		  so that certain auxiliary polynomials can be constructed
		  (see \cite{Mahler29} or \cite[Chapter 1]{NishiokaBook}).
		 
		 Certain attempts have been made
		 to prove algebraic independence of
		 values of ``Mahler's functions'' $f_1,\ldots,
		 f_n$. As far as we know,
		 only the case of linear functional equations has been treated by various 
		 authors (see \cite[Chapter 3]{NishiokaBook} and the references there). More 
		 precisely, they consider functions $f_1(t),\ldots,f_n(t)$ such that:
		 $$f_i(t^d)=a_{i,1}(t)f_1(t)+\ldots+a_{i,n}f_n(t)+b_i(t)$$
		 for some $a_{i,1},\ldots,a_{i,n},b_i\in \bar{\Q}(t)$, for every $1\leq i\leq 
		 n$.
		 
		 We may ask the following interesting question:
		 \begin{question}\label{q:valuepsi}
		 	Let $f_1,\ldots,f_n$ be disintegrated polynomials of a common degree 
		 	$d\geq 2$. Write $\psi_i=\psi_{f_i}$ for $1\leq i\leq n$. 
		 	Assume that for every $1\leq i\neq j\leq n$, for every $N\geq 1$,
		  the iterates  $f_i^N$ and $f_j^N$ are not semi-conjugate to a common
		  polynomial. Is it true that for every $a\in \bar{\Q}$ such that 
		  $|a|$ is sufficiently large, the numbers $\psi_1(a),\ldots,\psi_n(a)$
		  are algebraically independent over $\Q$?
		 \end{question}
		 
		 Theorem \ref{thm:main2} shows that $\psi_1,\ldots,\psi_n$ are algebraically 
		 independent over $K(t)$. We hope experts in Mahler's method can somehow 
		 utilize our result as a first step in 
		 answering Question \ref{q:valuepsi}.
		 
		\subsection{B\"ottcher coordinates, Green's functions and canonical heights.} 
		The B\"ottcher
		coordinate of $f$ is the 
		inverse
		$\phi_f(t)=\psi_f^{-1}(t)$ which is well-defined up to multiplication
		by a root of unity \cite[Chapter 9]{MilnorBook}. We may ask the following question:
		\begin{question}\label{q:phi}
			Let $f_1,\ldots,f_n$ be disintegrated 
				polynomials of degrees at least 2. Write
				$\phi_i=\phi_{f_i}$ for $1\leq i\leq n$. 
			\begin{itemize}
				\item [(a)] Give a necessary and sufficient
				condition such that $\phi_1,\ldots,\phi_n$
				are algebraically independent over $K(t)$.
				\item [(b)] Suppose $\phi_1,\ldots,\phi_n$ are algebraically
				independent over $\bar{\Q}(t)$. Is it true that
				$\phi(a),\ldots,\phi_n(a)$ are algebraically independent over
				$\bar{\Q}$ for every $a\in\bar{\Q}$ such that 
				$|a|$ is sufficiently large?
			\end{itemize}
		\end{question}
		
		For every polynomial $f$ of degree $d\geq 2$, 
		the dynamical Green's function $G_f$ is defined as:
		$$G_f(a)=\lim_{n\to \infty}\frac{\log|f^n(a)|}{d^n}\ \forall a\in\C.$$
		One could show that $G_f(a)=\log|\phi_f(a)|$ for 
		all sufficiently large $|a|$ \cite[Chapter 9]{MilnorBook}. We conclude this paper by explaining 
		the relation between transcendence of values of Green's functions and
		irrationality of values of canonical heights.
		
		Let $h$ denote the absolute logarithmic height on $\bP^1(\bar{\Q})$
		\cite[Chapter 3]{Sil-ArithDS}. 
		For every rational map $\varphi\in \bar{\Q}(t)$
		of degree $d\geq 2$,
		the canonical height $\hat{h}_\varphi$ is defined as:
		$$\hat{h}_\varphi(a)=\lim_{n\to \infty}\frac{h(\varphi^n(a))}{d^n}\ \forall a\in \bar{\Q}.$$
		It is not difficult to prove that $a$ is $\varphi$-preperiodic if and 
		only if $\hat{h}_\varphi(a)=0$ \cite[Chapter 3]{Sil-ArithDS}. Silverman
		suggests the following question:
		\begin{question}\label{q:transheight}
		Is it true that for every $\varphi$ and for every $\varphi$-wandering $a$,
		the number $\hat{h}_\varphi(a)$ is transcendental?
		\end{question}
		
    
    As far as we know, almost nothing
		is known about Question \ref{q:transheight}. In fact, not much is 
		known
		about the following much weaker:
		\begin{question}\label{q:irrheight}
		Let $f\in \Q[t]$ be a polynomial of degree $d\geq 2$. Is it true that there
		exists $a\in \Q$ such that $\hat{h}_f(a)$ is irrational?
		\end{question}
		
		For $f$ as in Question \ref{q:irrheight}, and for every $a\in \Q$, we have 
		the following:
		\begin{equation}\label{eq:h_f}
		\hat{h}_f(a)=G_f(a)+\log|R_f(a)|,
		\end{equation}
		where $R_f(a)\in \Q$ is the ``contribution from the finite primes'' which
		is relatively easy to handle \cite[Chapter 5]{Sil-ArithDS}. Let $K_f$ denote the filled Julia
		set of $f$ \cite[Chapter 9]{MilnorBook}. If $K_f\cap \R$ has a non-empty interior, one 
		can pick 
		any $f$-wandering $b\in K_f \cap \Q$ so that $G_f(b)=0$, hence 
		$\hat{h}_f(b)=\log|R_f(b)|$
		is irrational (indeed transcendental by the Lindemann-Weierstrass theorem).
		In general when we cannot pick any $f$-wandering $b$ from 
		$K_f\cap \Q$, the difficulty arises from the presence of the archimedean
		contribution $G_f$.
		
		Pick a choice of $\phi_f$. Let $\bar{\phi}_f$ denote the Laurent series 
		obtained from $\phi_f$
		by taking the complex conjugate of each coefficient. Since $f(t)\in \Q[t]$,
		we have $\bar{\phi}_f(f(t))=(\bar{\phi}_f(t))^d$. Hence there exists
		a $(d-1)$th root of unity $\zeta$ such that 
		$\bar{\phi}_f=\zeta\phi_f$. For every $a\in \Q$ such that 
		$|a|$ is sufficiently large, we have:
		\begin{align*}
		G_f(a)=\log|\phi_f(a)|&=\frac{1}{2}\log(\phi_f(a)\bar{\phi}_f(a))
		=\frac{1}{2}\log\zeta(\phi_f(a))^2\\
		&=\frac{1}{2(d-1)}\log(\phi_f(a))^{2(d-1)}.
		\end{align*}
		Therefore, 
		we can rewrite (\ref{eq:h_f}) as:
		\begin{equation}\label{eq:rewriteh_f}
		\hat{h}_f(a)=\frac{1}{2(d-1)}\log\phi_f(a)^{2(d-1)}+\log|R_f(a)|.
		\end{equation}
		
		Let $g\in Q[t]$ be another   
		polynomial. If $a\in \Q$ such that $|a|$ is sufficiently large, and 
		$\displaystyle\frac{\hat{h}_f(a)}{\hat{h}_g(a)}\in \Q$
		then (\ref{eq:rewriteh_f}) implies that there exist positive integers $m$ and $n$ 
		such that 
		$\displaystyle\frac{\phi_f(a)^m}{\phi_g(a)^n}\in \Q$. Therefore, we have
		the following:
		\begin{remark}
			Suppose that part (b) of Question \ref{q:phi} has an affirmative answer.
			Let $f,g\in \Q[t]$ be disintegrated polynomials of degrees at least 2
			such that $\phi_f$ and $\phi_g$ are algebraically independent
			over $\bar{\Q}(t)$. Then for every $a\in \Q$ such that $|a|$ is sufficiently
			large, we have $\displaystyle\frac{\hat{h}_f(a)}{\hat{h}_g(a)}$
			is irrational.
		\end{remark}

	\bibliographystyle{amsalpha}
	\bibliography{Independence} 	

\end{document}